\documentclass[reqno, 12pt]{amsproc}
\usepackage{amsmath,amsthm,amsfonts,amssymb}
\usepackage[cp1251]{inputenc}
   \usepackage[english]{babel}
\date{}
\setlength{\textwidth}{15truecm}
\setlength{\textheight}{24truecm}
\setlength{\oddsidemargin}{0pt} \setlength{\oddsidemargin}{0cm}
\setlength{\evensidemargin}{0cm} \setlength{\topmargin}{-25pt}

\newtheorem{theorem}{Theorem}[section]
\newtheorem{proposition}[theorem]{Proposition}
\newtheorem{example}[theorem]{Example}
\numberwithin{equation}{section}

\begin{document}

\title[Infinite-dimensional  Fokker--Planck--Kolmogorov equations]{Infinite-dimensional nonlinear  stationary  Fokker--Planck--Kolmogorov equations}

\author[V. Bogachev,  M. R\"ockner,  S. Shaposhnikov]{Vladimir I. Bogachev,  Michael R\"ockner,
Stanislav V. Shaposhnikov}
\address{V.B., S.S.: Moscow Lomonosov State University,
National Research University ``Higher School of Economics'';
M.R.: Bielefeld University, Germany}

\begin{abstract}
We prove existence of a probability solution to the nonlinear stationa\-ry Fokker--Planck--Kolmogorov equation
on an infinite dimensional space with a cente\-red Gaussian measure $\gamma$ with a unit diffusion operator
and a drift of the form $-x+v(p,x)$, where $v$ is a bounded mapping with values in the Cameron--Martin space $H$
of $\gamma$ and $v$ is defined on the space $E\times X$, where is $E$ is the subset of $L^2(\gamma)$ consisting
of probability densities. The equation has the form $L_{b(p,\bullet)} ^*(p\cdot \gamma)=0$ with
$L_{b(p,\bullet)}\varphi =\Delta_H \varphi + (b(p,\bullet) , D_{_H}\varphi)_{_H}$,
so that the drift coefficient depends on the unknown solution, which makes the equation nonlinear. This dependence is assumed
to satisfy a suitable continuity condition.
This result is applied to drifts of Vlasov type defined by means of the convolution of a vector field with the solution.
In addition, we consider a more general situation where only the components of $v$ are uniformly bounded and prove
the existence of a probability solution under some stronger continuity condition on the drift.

Keywords:  stationary Fokker--Planck--Kolmogorov equation, nonlinear Fok\-ker--Planck--Kolmogorov equation, Gaussian measure
\end{abstract}

\dedicatory{Dedicated to Leonard Gross on the occasion of his 95th birthday}

\maketitle

\section{Introduction}

Leonard Gross, one of the pioneers of infinite-dimensional analysis, initiated his program
of the study of infinite-dimensional harmonic analysis in his seminal papers \cite{G60}, \cite{G63}, and \cite{G67}.
In the subsequent years, this direction was developed by Gross himself, his students, including Piech, Kuo, and Gordina
(see, e.g., \cite{GRT}, \cite{Kuo}, \cite{KuoP}, \cite{Piech74}, and \cite{Piech75}), and many followers all over the world.
Part of Gross's program was the investigation of elliptic equations on infinite-dimensional spaces, see, e.g.,
\cite{Cer}, \cite{DP}, \cite{DPZ}, and \cite{DF}  (of course, there is also
a parabolic counterpart which is not discussed here). As in the finite-dimensional case,
several different types of elliptic equations arise: direct equations such as $Lf=\Delta f+\langle b,\nabla f\rangle =0$,
divergence type equations, and double-divergence type equations or adjoint equations,
which in case of a constant second order coefficients look like $\Delta f- {\rm div}(fb)=0$.
The latter equation is naturally defined for measures as
$$
L^*\mu=0,
$$
which in the finite-dimensional case is understood as the identity
$$
\int_{\mathbb{R}^n} L\varphi\, d\mu=0, \quad \varphi\in C_0^\infty(\mathbb{R}^n),
$$
 and in such a form is called the stationary Fokker--Planck--Kolmogorov equation.
 This equation is meaningful if $b$ is locally integrable with respect to $\mu$, in particular, if $b$ is locally bounded.

A nonlinear equation arises when the drift $b$ depends on the unknown solution~$\mu$.
For example, a Vlasov-type equation corresponds to the drift
$$
b(\mu,x)=\int_{\mathbb{R}^n}  b_0(x-y)\, \mu(dy).
$$
For a  recent survey of nonlinear Fokker--Planck--Kolmogorov equations see \cite{BS24}.
In this paper, we study nonlinear  stationary  Fokker--Planck--Kolmogorov equations on infinite-dimensional spaces.
Such equations are defined similarly, which is discussed in Section~2.

Our main result (stated and proved in Section~3)
gives the existence of a probabi\-lity solution to the infinite-dimensional nonlinear stationary equation with the drift
$$
b(\mu,x)=-x+v(\mu,x),
$$
 where $v$ takes values in the Cameron--Martin space $H$ of a centered Radon Gaussian measure $\gamma$
on a locally convex space~$X$ and is uniformly bounded with respect to the Cameron--Martin norm on~$H$ and
is continuous with respect to $\mu$ is a suitable sense. A~solution is constructed in the set of probability measures with
densities from~$L^2(\gamma)$. It is also shown that if the field $v_u$ depends Borel measurably on a parameter $u$ from a
complete separable metric space~$U$, then one can select a solution $p_u$ that is Borel measurable in~$u$.
Finally, we consider a more general case where only the components of $v$ are uniformly bounded. However, in order to ensure
the existence of a probability solution in this case we impose some stronger continuity condition on~$v$.

\section{Notation, terminology and auxiliary results}

Recall (see \cite{B07}) that a Borel probability measure on a topological space is called Radon if for every Borel set $B$ we have
$\mu(B)=\sup \mu(K)$, where $\sup$ is taken over compact subsets of $B$.

Let $\gamma$ be a centered Radon Gaussian measure on a locally convex space $X$ (see, e.g.,~\cite{B98}
for a detailed presentation of the theory of Gaussian measures), i.e., a~Radon probability measure such that
every continuous linear functional on $X$ is a centered Gaussian random variable with respect to~$\gamma$.
The space of continuous linear functionals on $X$ is denoted by~$X^*$.
Without loss of generality
one can assume that $X=\mathbb{R}^\infty$ is the countable power of the real line
and $\gamma$ is the countable power of the standard Gaussian measure on the real line.
In that case the dual space $X^*$ can be identified with the space $\mathbb{R}^\infty_0$ of finite sequences.
This measure $\gamma$ is called the standard Gaussian measure on~$\mathbb{R}^\infty$.

Let $X_\gamma^*$ be the closure of $X^*$ in $L^2(\gamma)$.
For the standard Gaussian measure $\gamma$  on~$\mathbb{R}^\infty$ the space $X_\gamma^*$ can be identified with
the standard Hilbert space~$l^2$: each element $(h_n)$ of $l^2$ generates the element
$\widehat{h}(x)=\sum_{n=1}^\infty h_n x_n$ of $X_\gamma^*$, and conversely every element of $X_\gamma^*$ admits such representation.

The Cameron--Martin space $H=H(\gamma)$ of $\gamma$ is the space of all vectors $h$ with finite norm
$$
|h|_{_H}=\sup\biggl\{l(h)\colon l\in X^{*}, \int_X l^2\, d\gamma \le 1\biggr\}.
$$
For every element $h\in H(\gamma)$ there is a unique element $\widehat{h}\in X_\gamma^*$ such that
$$
l(h)=\int_X \widehat{h} l\, d\gamma \quad \forall\, l\in X^*.
$$
The norm on $H=H(\gamma)$ indicated above is generated by the inner product
$$
(u,v)_{_H}=\int_X \widehat{u}\widehat{v}\, d\gamma.
$$
The mapping $h\mapsto \widehat{h}$ is a linear isometry of $H(\gamma)$ and $X_\gamma^*$, in particular,
for each $f\in X_\gamma^*$ there is a vector $h\in H(\gamma)$ with $f=\widehat{h}$,  and the inverse isometry is denoted
by $j_{_H}$. Thus, $j_{_H}(\widehat{h})=h$ and
$$
(j_{_H}(f), j_{_H}(g))_{_H}=(f,g)_{L^2(\gamma)}\quad \forall\, f,g\in X_\gamma^*.
$$
For the  standard Gaussian measure $\gamma$  on~$\mathbb{R}^\infty$ the Cameron--Martin space coincides with $l^2$
and $h\mapsto \widehat{h}$ is the identity mapping when $X_\gamma^*$ is identified with $l^2$ as explained above.

It is well known that the Cameron--Martin space is a separable Hilbert space. In addition, the measure $\gamma$ is concentrated
on a countable union of metrizable compact sets, so one can assume below that $X$ is such a space. Moreover, if the space $H$ is infinite-dimensional,
then there is a Borel linear isomorphism between $\gamma$ and the standard Gaussian measure $\gamma_\infty$ in the following sense:
there are Borel linear subspaces $X_1\subset X$ and $X_2\subset \mathbb{R}^\infty$ with
$\gamma(X_1)=\gamma_\infty(X_2)=1$ and one-to-one Borel linear operator $J\colon X_1\to X_2$ with Borel $J^{-1}$ such that $J$ takes
$\gamma$ to $\gamma_\infty$ and $J\colon H\to l^2$ is an isometry. It follows that the problem we discuss below reduces to the case
of $\mathbb{R}^\infty$ with  the standard Gaussian measure. The readers who prefer to deal with coordinate representations can assume that
we consider this case.

It is always possible to find an orthonormal basis $\{e_n\}$ in $H(\gamma)$ such that
$\widehat{e}_n\in X^*$. For all $h\in H$ and $l\in X^*$ we
have
\begin{equation}\label{e1}
l(h)=(h, j_{_H}(l))_{_H}=\sum_{n=1}^\infty (h,e_n)_{_H} (j_{_H}(l),e_n)_{_H}=\sum_{n=1}^\infty \widehat{e}_n(h)l(e_n).
\end{equation}

According to Tsirelson's theorem (see \cite{B98}), for any orthonormal basis $\{e_n\}$ in $H(\gamma)$,
we have
$$
x=\sum_{n=1}^\infty \widehat{e}_n(x)e_n \quad \hbox{$\gamma$-a.e.},
$$
where the series converges in $X$. In particular, for every $l\in X^*$ we have
\begin{equation}\label{e2}
l(x)=\sum_{n=1}^\infty \widehat{e}_n(x)l(e_n) \quad \hbox{$\gamma$-a.e.}
\end{equation}

Let $\mathcal{F}\mathcal{C}$ be the space of functions on $X$ of the form
$$
\varphi(x)=\psi(l_1(x),\ldots,l_n(x)), \quad \psi\in C_b^\infty(\mathbb{R}^n), \ l_i\in X^*.
$$
Such functions will be called smooth cylindrical.
Any function $\varphi\in \mathcal{F}\mathcal{C}$  of this form  has bounded partial derivatives
$\partial_h \varphi$ for all $h\in X$ and
$$
\partial_h \varphi(x)=\lim\limits_{t\to 0} \frac{\varphi(x+th)-\varphi(x)}{t}=\sum_{i=1}^n \partial_{x_j}\psi(l_1(x),\ldots,l_n(x))l_j(h).
$$
Consequently,
$$
\partial_h^2 \varphi=\sum_{i,j=1}^n \partial_{x_j}\partial_{x_i}\psi(l_1,\ldots,l_n)l_i(h)l_j(h).
$$

Suppose that $v\colon X\to H$ is a Borel mapping and
$$
b(x)=-x+v(x).
$$
 Let us fix an orthonormal basis $\{e_n\}$ in $H(\gamma)$
such that $\widehat{e}_n\in X^*$.
Then we can define the operator
$$
L_b\varphi(x) :=\sum_{i=1}^\infty [\partial_{e_i}^2\varphi(x) + \widehat{e}_i(b(x))\partial_{e_i}\varphi(x)]
$$
defined on $\mathcal{F}\mathcal{C}$ and taking values in $L^2(\gamma)$.
Indeed, writing $\varphi$ as above, we have
\begin{multline*}
\partial_{e_i}^2\varphi + \widehat{e}_i(b(x))\partial_{e_i}\varphi(x)
\\
=
\sum_{j,k\le n} \partial_{x_k}\partial_{x_j}\psi(l_1,\ldots,l_n)l_k(e_i)l_j(e_i)
+\sum_{j\le n} \widehat{e}_i(b(x))\partial_{x_j}\psi(l_1,\ldots,l_n)l_j(e_i),
\end{multline*}
which gives
\begin{multline*}
L_b\varphi =\sum_{j,k\le n} \partial_{x_k}\partial_{x_j}\psi(l_1,\ldots,l_n)\sum_{i=1}^\infty l_k(e_i)l_j(e_i)
\\
+ \sum_{j\le n} \partial_{x_j}\psi(l_1,\ldots,l_n) \sum_{i=1}^\infty \widehat{e}_i(b(x)) l_j(e_i),
\end{multline*}
where
$$
\sum_{i=1}^\infty l_k(e_i)l_j(e_i)=(j_{_H}(l_k), j_{_H}(l_j))_{_H},
$$
$$
\sum_{i=1}^\infty \widehat{e}_i(b(x)) l_j(e_i)=-\sum_{i=1}^\infty \widehat{e}_i(x)l_j(e_i)+\sum_{i=1}^\infty
\widehat{e}_i(v(x)) l_j(e_i).
$$
According to \eqref{e1} and \eqref{e2} the latter equals $\gamma$-a.e.
$$
-l_j(x)+ l_j(v(x))=l_j(b(x)).
$$
Therefore,
$$
L_b\varphi=\sum_{j,k\le n} \partial_{x_k}\partial_{x_j}\psi(l_1,\ldots,l_n)(j_{_H}(l_k), j_{_H}(l_j))_{_H}
+\sum_{j\le n} l_j(b(x)).
$$
Thus, $L_b\varphi\in L^2(\gamma)$ and $L_b$ does not depend on our choice of $\{e_n\}$ with the indicated properties.

A Radon probability measure $\mu$ absolutely continuous with respect to $\gamma$ satisfies the stationary
Fokker--Planck--Kolmogorov equation
\begin{equation}\label{eq1}
L_b^{*}\mu=0
\end{equation}
if $l(b)\in L^1(\mu)$ for all $l\in X^*$ and
\begin{equation}\label{eq2}
\int_X L_b\varphi \, d\mu=0 \quad \forall \varphi\in \mathcal{F}\mathcal{C}.
\end{equation}

It is worth noting that \eqref{eq1} can be interpreted in a weaker sense: we fix an orthonormal basis $\{e_n\}$ in $H(\gamma)$
 such that $\widehat{e}_n\in X^*$, consider the class $\mathcal{F}\mathcal{C}_{\{e_n\}}$  of smooth cylindrical
 functions of the form indicated above defined by means of the sequence of functionals $l_n=\widehat{e}_n$
 and define \eqref{eq1} by means of the identity  \eqref{eq2} on $\mathcal{F}\mathcal{C}_{\{e_n\}}$.
 However, it is known (see \cite{BR95} or \cite[Theorem~7.5.6]{B98}) that if $v$ is bounded, then
 any Borel probability measure $\mu$ with $X^*\subset L^2(\mu)$ satisfying the equation
 $L_b^{*}\mu=0$ in this weaker sense is absolutely continuous with respect to $\gamma$,
 moreover, its Radon--Nikodym density belongs to $L^2(\gamma)$ (see \cite{BKS23}). Note also that in this case
 $\varrho=d\mu/d\gamma$ belongs to the Gaussian Sobolev class $W^{1,1}(\gamma)$ and
 $$
 \int_ X \frac{|D_{_H}\varrho|_{_H}^2}{\varrho}\, d\gamma \le \int_X |b|_{_H}^2\, d\gamma.
 $$

 A more general situation is studied in \cite{BSS19}, where the following theorem is proved.
 Let us consider the  class $\mathcal{F}\mathcal{C}_{0,\{e_n\}}$  of smooth cylindrical
 functions of the form
 $$\varphi(x)=\psi(l_1(x),\ldots,l_n(x))$$
  with $l_n=\widehat{e}_n$ and functions
 $\psi\in C_0^\infty(\mathbb{R}^n)$; unlike $\mathcal{F}\mathcal{C}$, this class is not a linear space.
 If $X=\mathbb{R}^\infty$ and $H=l^2$ with its natural basis $\{e_n\}$, then $l_n=\widehat{e}_n$ is the $n$-coordinate
 function and $\mathcal{F}\mathcal{C}_{0,\{e_n\}}$ is just the union of all $C_0^\infty(\mathbb{R}^n)$.

 A Borel measure  $\mu$ is called a solution to the equation
 $L_b^*\mu=0$ with respect to $\mathcal{F}\mathcal{C}_{0,\{e_n\}}$  if
 $l_n(v)\in L^1(\mu)$ for all $n$ and
 \eqref{eq2} holds for all $\varphi\in \mathcal{F}\mathcal{C}_{0,\{e_n\}}$.
Although $\mathcal{F}\mathcal{C}_{0,\{e_n\}}$ is not a linear space, an advantage
of using this class is that for any function $\varphi\in \mathcal{F}\mathcal{C}_{0,\{e_n\}}$
the functions $l_j\partial_{e_j} \varphi$ are bounded, so in the case of bounded $l_j(v)$ the functions
$L_b\varphi$ are also bounded and belong to $L^1(\mu)$.

 \begin{theorem}
Let $\mu$ be  a Borel probability measure on~$X$ such that $|v|_{_H}\in L^1(\mu)$
and $L_b^*\mu=0$ with respect to $\mathcal{F}\mathcal{C}_{0,\{e_n\}}$, where $b(x)=-x+v(x)$. Then $\mu$ is absolutely continuous with respect to $\gamma$
and for its Radon--Nikodym $f:=d\mu/d\gamma$ we have
\begin{equation}\label{b2}
\int_X f \bigl(\log(f+1)\bigr)^{\alpha}\, d\gamma \le
C(\alpha)\Bigl[
1 + \bigl\| |v|_H \bigr\|_{L^1(\mu)}
\Bigl(\log\bigl(1 + \bigl\| |v|_H \bigr\|_{L^1(\mu)}\bigr)\Bigr)^{\alpha}\Bigr]
\end{equation}
for every  $\alpha<1/4$, where $C(\alpha)$ is a number depending only on $\alpha$.
\end{theorem}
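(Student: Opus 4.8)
The plan is to reduce the problem to finite dimensions, establish a \emph{dimension-free} a priori bound of the form \eqref{b2} for the finite-dimensional projections of $\mu$, and then recover the assertion by a martingale argument. Set $\pi_n(x)=(\widehat{e}_1(x),\dots,\widehat{e}_n(x))$, let $\gamma_n=\gamma\circ\pi_n^{-1}$ be the standard Gaussian measure on $\mathbb{R}^n$, and put $\mu_n=\mu\circ\pi_n^{-1}$. Testing the equation against $\varphi=\psi\circ\pi_n$ with $\psi\in C_0^\infty(\mathbb{R}^n)$ (these belong to $\mathcal{F}\mathcal{C}_{0,\{e_n\}}$) and using the formula for $L_b\varphi$ from Section~2 together with $(j_{_H}(\widehat{e}_k),j_{_H}(\widehat{e}_j))_{_H}=\delta_{jk}$, one checks that $\mu_n$ solves the finite-dimensional stationary equation (in the sense of \eqref{eq2} on $C_0^\infty(\mathbb{R}^n)$) with drift $b_n(y)=-y+w_n(y)$, where $w_n(y)=\mathbb{E}^{\mu}[P_nv\mid \pi_n=y]$ and $P_n$ is the orthogonal projection in $H$ onto the span of $e_1,\dots,e_n$. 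The decisive quantitative fact is that conditional Jensen's inequality gives $|w_n(y)|\le \mathbb{E}^{\mu}[|v|_{_H}\mid\pi_n=y]$, whence
\[
\int_{\mathbb{R}^n}|w_n|\,d\mu_n\le \int_X |v|_{_H}\,d\mu=:M
\]
uniformly in $n$. By the finite-dimensional regularity theory for stationary Fokker--Planck--Kolmogorov equations, $\mu_n=f_n\gamma_n$ with $f_n\in W^{1,1}_{\mathrm{loc}}$, so the whole matter reduces to proving \eqref{b2} for $f_n$, with $M$ in place of $\||v|_{_H}\|_{L^1(\mu)}$ and a constant $C(\alpha)$ independent of $n$ (the right-hand side being increasing in $M$).

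To pass to the limit I would use the filtration $\mathcal{F}_n=\sigma(\widehat{e}_1,\dots,\widehat{e}_n)$ and the functions $g_n:=f_n\circ\pi_n$. The relation $\mu|_{\mathcal{F}_n}=g_n\cdot\gamma|_{\mathcal{F}_n}$ shows that $(g_n)$ is a nonnegative martingale with respect to $\gamma$ and $(\mathcal{F}_n)$. Since $\Phi(t):=t(\log(t+1))^{\alpha}$ is superlinear and the dimension-free finite-dimensional estimate bounds $\sup_n\int_X\Phi(g_n)\,d\gamma$ by the right-hand side of \eqref{b2}, the de la Vall\'ee--Poussin criterion yields uniform integrability of $(g_n)$. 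Hence $g_n\to f$ in $L^1(\gamma)$ and $\gamma$-a.e.; as $\sigma\bigl(\bigcup_n\mathcal{F}_n\bigr)$ is the Borel $\sigma$-algebra of $X=\mathbb{R}^\infty$, the limit satisfies $\mu=f\gamma$, which is the claimed absolute continuity, and \eqref{b2} carries over to $f$ by Fatou's lemma.

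The heart of the proof, and the main obstacle, is the dimension-free finite-dimensional estimate for a drift that is merely integrable. Using the $\gamma_n$-symmetry of the Ornstein--Uhlenbeck operator $L_0\psi=\Delta\psi-(y,\nabla\psi)$ and writing $\delta_\gamma$ for the Gaussian divergence (the adjoint of $\nabla$ in $L^2(\gamma_n)$), the equation for $f_n$ becomes $-L_0f_n=\delta_\gamma(w_nf_n)$. Testing with functions of $f_n$ gives the Fisher-type identity $\int |\nabla f_n|^2/f_n\,d\gamma_n=\int(w_n,\nabla f_n)\,d\gamma_n$, which together with the Gaussian logarithmic Sobolev inequality of Gross would control the full entropy $\int f_n\log f_n\,d\gamma_n$ in a dimension-free way, \emph{but} only under an $L^2(\mu_n)$ bound on $w_n$: the term $\int(w_n,\nabla f_n)\,d\gamma_n$ can be absorbed into the Fisher information solely at the cost of $\int|w_n|^2\,d\mu_n$. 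With merely $\int|w_n|\,d\mu_n\le M$ this elementary route breaks down, and I expect the genuine argument to rest on the inversion formula $f_n-1=N^{-1}\delta_\gamma(w_nf_n)$ on the mean-zero subspace, where $N=-L_0$ is the number operator and $w_nf_n\in L^1(\gamma_n;\mathbb{R}^n)$ has norm $M$.

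The operator $N^{-1}\delta_\gamma$ factors as $N^{-1/2}\circ(N^{-1/2}\delta_\gamma)$, where $N^{-1/2}\delta_\gamma$ is the Gaussian Riesz transform, bounded on every $L^p(\gamma_n)$ for $1<p<\infty$ with norm \emph{independent of the dimension} by Meyer's inequalities, while $N^{-1/2}$ is a half-order smoothing. Starting from the $L^1$ datum of norm $M$, I would run a Yano--Zygmund-type extrapolation that tracks the growth of these $L^p$ norms as $p\downarrow 1$ together with the gain from $N^{-1/2}$; this converts the $L^1$ bound into the $L(\log L)^{\alpha}$ integrability of $f_n$, with the precise threshold $\alpha<1/4$ and the blow-up of $C(\alpha)$ as $\alpha\uparrow 1/4$ reflecting the order of growth of the Riesz-transform norms combined with the half-order smoothing. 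Because Meyer's inequalities and Gross's logarithmic Sobolev inequality are dimension-free, all constants in this step are independent of $n$, which is exactly what the martingale argument requires; keeping this dimension-freeness quantitative throughout is the crux.
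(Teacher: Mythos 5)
First, note that the paper does not actually prove this statement: it is quoted as a known result, ``A more general situation is studied in \cite{BSS19}, where the following theorem is proved.'' So there is no in-paper proof to compare against, and your attempt has to be judged against the argument in \cite{BSS19}. Your outer shell is correct and is indeed the standard route there: projecting onto $\pi_n=(\widehat e_1,\dots,\widehat e_n)$, identifying the projected drift as the conditional expectation $w_n=\mathbb{E}^\mu[P_nv\mid\pi_n]$, getting $\int|w_n|\,d\mu_n\le\||v|_H\|_{L^1(\mu)}$ by conditional Jensen, and passing to the limit via the martingale $g_n=f_n\circ\pi_n$ with de la Vall\'ee--Poussin and Fatou. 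You also correctly diagnose why the Fisher-information/log-Sobolev identity only works for $|v|_{_H}\in L^2(\mu)$ and therefore cannot give the theorem. (A minor caveat: with a drift merely in $L^1(\mu_n)$ the finite-dimensional density need not lie in $W^{1,1}_{\mathrm{loc}}$; absolute continuity itself still holds, but you should not assert Sobolev regularity.)

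The theorem, however, \emph{is} the dimension-free finite-dimensional estimate, and that step you do not prove; the route you sketch has a concrete obstruction. Yano--Zygmund extrapolation converts operator bounds $\|T\|_{L^p\to L^p}\le C(p-1)^{-s}$ as $p\downarrow1$ into a statement of the form $T\colon L(\log L)^s\to L^1$; it does not upgrade an $L^1$ datum to $L(\log L)^{\alpha}$ output. To gain integrability starting only from $\|w_nf_n\|_{L^1(\gamma_n)}\le M$ you would need a genuine endpoint smoothing result for $N^{-1}\delta_\gamma$ near $L^1$ that is uniform in the dimension, and the known weak-type $(1,1)$ bounds for Gaussian Riesz transforms are not dimension-free --- so the uniformity in $n$, which you yourself identify as the crux, is exactly what is missing. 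Moreover, the datum $w_nf_n$ contains the unknown $f_n$, so even a bounded mapping property of $N^{-1}\delta_\gamma$ would only yield a self-referential inequality requiring an absorption or bootstrap step that you do not supply, and nothing in the sketch explains where the threshold $\alpha<1/4$ would emerge. The argument in \cite{BSS19} (and in the companion estimate from \cite{BKS23} quoted later in this paper) is different in kind: it is a duality argument through the Ornstein--Uhlenbeck semigroup, writing $\int\varphi\,d\mu=\int T_t\varphi\,d\mu+\int_0^t\int(v,\nabla_HT_s\varphi)_H\,d\mu\,ds$ and combining the dimension-free gradient bound $\|\nabla_HT_s\varphi\|_\infty\le Cs^{-1/2}\|\varphi\|_\infty$ with Gaussian concentration/hypercontractivity estimates for $T_t\varphi$ applied to (mollified) indicators of level sets of $f$; a tail bound for $\mu(f\ge t)$ results, and optimizing over $t$ produces the exponent $1/4$. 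As it stands your text is an outline with the central estimate left open, not a proof.
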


It follows from this result that the solution with respect to $\mathcal{F}\mathcal{C}_{0,\{e_n\}}$ will be also a
solution with respect to $\mathcal{F}\mathcal{C}$, because any function $f\in \mathcal{F}\mathcal{C}$
can be approximated in any $L^p(\gamma)$ by functions from  $\mathcal{F}\mathcal{C}_{0,\{e_n\}}$
along with its partial derivatives $\partial_{e_i}f$, $\partial_{e_i}^2f$, where $i\le N$ and $N$ is fixed.

Solutions to nonlinear equations are defined similarly.
Namely, if $E$ is a Borel set of probability densities in $L^1(\gamma)$ and
$v\colon E\times X\to H$ is a Borel mapping, then a measure $\mu=\varrho\cdot \gamma$ with $\varrho\in E$
is called a solution to the nonlinear Fokker--Planck--Kolmogorov equation
\begin{equation}\label{eq1-n}
L_{b(\varrho, \bullet)}^*\mu=0, \quad \hbox{where\ } b(\varrho,x)=-x+v(\varrho,x),
\end{equation}
if the measure $\mu$ satisfies the linear equation \eqref{eq1} with $b=b(\varrho, \bullet)$.

It is worth noting that the estimate from this theorem is related to the logarithmic Sobolev inequality,
another important contribution of Leonard Gross (see, e.g., \cite{G75} and~\cite{G25}), which, however, 
should be a subject of a separate survey.

\section{Main results}

Our main result states the existence of a probability solution to the nonlinear equation with a bounded field $v$.

\begin{theorem}
Let $v\colon E\times X\to H$ be a bounded Borel mapping, where $E$ is the subset of $L^2(\gamma)$ consisting of probability densities,
equipped with the weak topology of~$L^2(\gamma)$.
Suppose that for each $x$ and each $h\in H$ the function $p\mapsto (v(p,x), h)_{_H}$
is sequentially continuous on $E$ with respect to the weak topology of $L^2(\gamma)$. Let $b(p,x)=-x+v(p,x)$.
Then  equation \eqref{eq1-n} has a solution $\mu$ given by a density $p_\mu$ from~$E$.
\end{theorem}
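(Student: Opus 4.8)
The plan is to recast the nonlinear equation as a fixed-point problem for a set-valued map on probability densities and to solve it by a Kakutani--Fan--Glicksberg type theorem in the weak topology of $L^2(\gamma)$. First I would set $M:=\sup_{p,x}|v(p,x)|_{_H}<\infty$ and introduce
$$
K=\Bigl\{\varrho\in L^2(\gamma)\colon \varrho\ge 0,\ \int_X\varrho\,d\gamma=1,\ \|\varrho\|_{L^2(\gamma)}\le R\Bigr\},
$$
where $R=R(M)$ is the a priori bound on the $L^2(\gamma)$-norm of the density of any probability solution of the \emph{linear} equation $L_{b}^*\mu=0$ with drift $b=-x+w$, $|w|_{_H}\le M$; such $L^2$-membership and bounds are furnished by the linear theory recalled in Section~2 (see \cite{BKS23} and the estimate \eqref{b2}). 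The set $K$ is convex, bounded and norm-closed, and the constraints $\varrho\ge 0$, $\int\varrho\,d\gamma=1$ are weakly closed, so $K$ is convex and weakly compact in $L^2(\gamma)$. Since $L^2(\gamma)$ is separable, the weak topology on $K$ is metrizable, so I may argue throughout with sequences.

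Next I would define $\Phi\colon K\to 2^{K}$ by letting $\Phi(p)$ be the set of densities $\varrho$ of probability solutions of the linear equation $L_{b(p,\bullet)}^*(\varrho\cdot\gamma)=0$ with frozen drift $b(p,\bullet)=-x+v(p,\bullet)$. For each fixed $p$ the map $v(p,\bullet)$ is a bounded $H$-valued Borel map; existence of a probability solution of the corresponding linear equation (e.g.\ as a limit of solutions of its finite-dimensional projections, which are invariant measures of nondegenerate diffusions with bounded drift) together with the $L^2(\gamma)$-regularity and the bound $R$ shows that $\Phi(p)$ is nonempty and contained in $K$. The solution set of a linear equation is convex, and for fixed $p$ each identity $\int_X L_{b(p,\bullet)}\varphi\,\varrho\,d\gamma=0$ is a weakly continuous constraint on $\varrho$ (as $L_{b(p,\bullet)}\varphi\in L^2(\gamma)$ for $\varphi\in\mathcal{F}\mathcal{C}$), so $\Phi(p)$ is weakly compact and convex.

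The heart of the proof is to show that $\Phi$ has weakly closed graph. Suppose $p_n\rightharpoonup p$ and $\varrho_n\in\Phi(p_n)$ with $\varrho_n\rightharpoonup\varrho$ in $K$; I must pass to the limit in
$$
\int_X\Bigl[L_0\varphi+(v(p_n,x),D_{_H}\varphi(x))_{_H}\Bigr]\varrho_n\,d\gamma=0,\qquad \varphi\in\mathcal{F}\mathcal{C},
$$
where $L_0\varphi=\Delta_{_H}\varphi-\sum_i\widehat{e}_i(x)\partial_{e_i}\varphi$ is the Ornstein--Uhlenbeck part. The term $\int_X L_0\varphi\,\varrho_n\,d\gamma$ converges because $L_0\varphi\in L^2(\gamma)$ and $\varrho_n\rightharpoonup\varrho$. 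The difficulty is the drift term, which for cylindrical $\varphi$ is a finite sum of integrals $\int_X\partial_{x_j}\psi(l_1,\dots,l_m)\,g_n^{(j)}\,\varrho_n\,d\gamma$ with $g_n^{(j)}(x)=(v(p_n,x),j_{_H}(l_j))_{_H}$: both factors $g_n^{(j)}$ and $\varrho_n$ depend on $n$, and a product of two merely \emph{weakly} convergent sequences need not converge. This is the main obstacle, and it is resolved exactly by the two standing hypotheses on $v$. The sequential weak continuity of $p\mapsto(v(p,x),j_{_H}(l_j))_{_H}$ gives $g_n^{(j)}(x)\to(v(p,x),j_{_H}(l_j))_{_H}$ pointwise in $x$, while the uniform bound $|g_n^{(j)}|\le M\,|j_{_H}(l_j)|_{_H}$ allows dominated convergence on the probability space $(X,\gamma)$, so $g_n^{(j)}\to g^{(j)}:=(v(p,\bullet),j_{_H}(l_j))_{_H}$ \emph{strongly} in $L^2(\gamma)$. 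A strongly convergent factor times a weakly convergent one converges: in
$$
\int_X h\,g_n^{(j)}\varrho_n\,d\gamma-\int_X h\,g^{(j)}\varrho\,d\gamma
=\int_X h\,(g_n^{(j)}-g^{(j)})\varrho_n\,d\gamma+\int_X h\,g^{(j)}(\varrho_n-\varrho)\,d\gamma
$$
the first integral is bounded by $\|h\|_\infty\|g_n^{(j)}-g^{(j)}\|_{L^2(\gamma)}\|\varrho_n\|_{L^2(\gamma)}\to 0$ and the second tends to $0$ since $hg^{(j)}\in L^2(\gamma)$ and $\varrho_n\rightharpoonup\varrho$. Hence the drift term passes to the limit, $\varrho\in\Phi(p)$, and $\Phi$ has closed graph. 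I stress that one cannot instead force the \emph{densities} to converge strongly: the compact embedding $W^{1,2}(\gamma)\hookrightarrow L^2(\gamma)$ used in finite dimensions fails here because the Ornstein--Uhlenbeck operator has infinite-dimensional eigenspaces, so the strong convergence must be placed on the drift side, which is precisely what boundedness of $v$ buys.

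Finally, $\Phi\colon K\to 2^{K}$ has nonempty convex closed values, $K$ is a convex compact subset of the locally convex space $L^2(\gamma)$ with its weak topology, and $\Phi$ has closed graph (hence is upper semicontinuous into the compact $K$); the Kakutani--Fan--Glicksberg fixed-point theorem then yields $p_\mu\in\Phi(p_\mu)$, and $\mu:=p_\mu\cdot\gamma$ is the required probability solution of \eqref{eq1-n}. (If the linear equation with bounded drift is known to have a unique probability solution, then $\Phi$ is single-valued and weakly continuous and the ordinary Schauder--Tychonoff theorem suffices.)
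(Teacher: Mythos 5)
Your proposal is correct, and its analytic core coincides with the paper's: both work in a convex, weakly compact set of densities cut out by a uniform a priori $L^2(\gamma)$-bound coming from \cite{BKS23}, both exploit metrizability of the weak topology there to argue with sequences, and both pass to the limit in the drift term by exactly the same mechanism (pointwise convergence of $(v(p_n,x),h)_{_H}$ plus the uniform bound gives strong $L^2$-convergence by dominated convergence, which pairs against the weakly convergent densities). The one genuine structural difference is in the fixed-point step. The paper invokes Shigekawa's theorem \cite{Shig}, which gives \emph{uniqueness} as well as existence for the frozen linear equation; this makes the solution map $\Psi\colon p\mapsto\varrho_p$ single-valued, and uniqueness is then used a second time in the continuity proof to identify an arbitrary weak limit point $g$ of $\Psi(p_{n_k})$ with $\Psi(p)$, after which ordinary Schauder applies. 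You avoid uniqueness altogether by working with the set-valued solution map $\Phi$ and the Kakutani--Fan--Glicksberg theorem, verifying nonemptiness, convexity and weak compactness of the values and the closed-graph property. Your route is more robust (it would survive in situations where uniqueness for the linear problem is unknown) at the price of a heavier fixed-point theorem and of having to justify nonemptiness of $\Phi(p)$ independently; the paper's route is shorter because \cite{Shig} hands it both existence and uniqueness at once. Two small imprecisions worth fixing: the $L^2$-bound defining your set $K$ comes from the tail estimate of \cite[Theorem~2.3]{BKS23} rather than from \eqref{b2}, which is only an $L\log L$-type bound; and your parenthetical sketch of existence for the linear equation via finite-dimensional projections should simply be replaced by a citation of \cite{Shig}.
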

\begin{proof}
Let us fix $p\in E$. According to \cite{Shig}, there is a unique probability density $\varrho_p\in L^1(\gamma)$ such that the measure
$\varrho\cdot \gamma$ satisfies the equation
$L_{b(p,\bullet)}^* (\varrho_p\cdot \gamma)=0$ with the drift $b(p,x)=-x+v(p,x)$.
It follows from \cite{Hino} and \cite{BKS23} that $\varrho_p\in L^2(\gamma)$.
Therefore, we obtain a mapping $\Psi\colon p\mapsto \varrho_p$ from $E$ to~$E$.
We are going to apply Schauder's theorem to show that this mapping has a fixed point. Clearly,
any fixed point gives a solution to the nonlinear equation.

First we observe that there is a constant $C$ such that every probability solution to the equation
$L_{b(p,\bullet)}^* (\varrho\cdot \gamma)=0$ with $p\in E$ satisfies the estimate
\begin{equation}\label{b1}
\int_X \varrho^2\, d\gamma \le C.
\end{equation}
This follows from \cite[Theorem~2.3]{BKS23}, which gives the bound
$$
\gamma(x\colon \varrho(x)\ge t)\le e^2 \exp \big(-\sigma_\infty |\ln t|^2\big), \ t>1,
$$
where $\sigma_\infty =\big(2\pi \big\| |v(p,\bullet)|_{_H} \big\|_{\infty} \big)^{-2}$.
If $|v(p,\bullet)|_{_H}=0$, then this bound means that $\varrho=1$. Once $|2\pi v(p,x)|_{_H}\le C_0$, we obtain
$$
\gamma(x\colon \varrho(x)\ge t)\le e^2 \exp \big(-C_0^{-2} |\ln t|^2\big), \ t>1,
$$
so that
$$
\int_X \varrho^2\, d\gamma \le 1+ 2e^2 \int_1^{+\infty} t \exp \big(-C_0^{-2} |\ln t|^2\big)\, dt.
$$
Next, the subset $S$ of $E$ satisfying \eqref{b1} is weakly compact in $L^2(\gamma)$.
In addition, this subset is convex.
In order to apply Schauder's theorem, it remains to verify that $\Psi$ is continuous on $S$ with respect to the
weak topology of $L^2(\gamma)$. Note that the weak topology is metrizable on~$S$, since $S$ is bounded and $L^2(\gamma)$ is separable.
Suppose that functions $p_n$ converge in $S$ to a function $p\in S$ in the weak topology. We have to show that the functions
$\Psi(p_n)$ converge weakly to~$\Psi(p)$.
Otherwise there is a subsequence $\{p_{n_k}\}$ such that $\Psi(p_{n_k})$  converges weakly to some
$g\in S$ different from $\Psi(p)$. Thus, we can assume that the whole sequence $\{\Psi(p_n)\}$ converges to $g\not=\Psi(p)$.
It suffices to show that $g$ satisfies the equation
$L_{b(p,\bullet)}^* (g\cdot \gamma)=0$, because $\Psi(p)$ is the only solution to this equation in~$S$.
Let $\varphi$ be a smooth cylindrical function
of the form $\varphi(x)=\psi(l_1(x),\ldots, l_m(x))$, where $\psi\in C_b(\mathbb{R}^m)$ and $l_i\in X^*$.
As explained above,
we have
$$
L_{b(p,\bullet)}\varphi=\sum_{j,k\le m} \partial_{x_k}\partial_{x_j}\psi(l_1,\ldots,l_n)(j_{_H}(l_k), j_{_H}(l_j))_{_H}
-\sum_{j\le m} l_j(x)+\sum_{j\le m} l_j(v(p,x))
$$
and similarly for $p_n$. Clearly,
$$
\int_X \partial_{x_k}\partial_{x_j}\psi(l_1,\ldots,l_n)\, \Psi(p_n)\, d\gamma \to
\int_X \partial_{x_k}\partial_{x_j}\psi(l_1,\ldots,l_n)\, g\, d\gamma,
$$
$$
\int_X l_j \Psi(p_n)\, d\gamma \to \int_X l_j g\, d\gamma
$$
for all $j\le m$. In addition,
$l_j(v(p_n,x))\to l_j(v(p,x))$ and these functions are uniformly bounded.
Hence
$$
\int_X l_j(v(p_n,x)) \Psi(p_n)(x)\, \gamma(dx) \to \int_X l_j(v(p,x)) g(x)\, \gamma(dx).
$$
Thus,
$$
\int_X L_{b(p_n,\bullet)}\varphi \Psi(p_n)\, d\gamma \to \int_X L_{b(p,\bullet)}\varphi g\, d\gamma,
$$
where the left-hand sides vanish, so the right-hand side vanishes too, which means that
$L_{b(p,\bullet)}^* (g\cdot \gamma)=0$. The proof is complete.
\end{proof}

It is worth noting that the solution density $p_\mu$ belongs not only to $L^2(\gamma)$, but also
to all $L^\alpha(\gamma)$, $\alpha\in [1,  +\infty)$, which follows from \cite[Theorem~4.1]{Hino}.
Moreover, according to \cite{BKS23}, there is $\varepsilon>0$ such that
$\exp (\varepsilon |\log \max (p_\mu,1)|^2)\in L^1(\gamma)$. However, as an example in \cite{BKS23} shows,
it can happen that $\exp(\varepsilon p_\mu)\not\in L^1(\gamma)$ for all $\varepsilon>0$.

\begin{example}
\rm
Let $b_0\colon X\to H$ be a bounded Borel mapping and
$$
b(p,x)=\int_X b_0(x-y)p(y)\, \gamma(dy).
$$
Then $b$ satisfies the hypotheses of the theorem.
Indeed,
if functions $p_n$ converge in $E$ to a function $p$ in the weak topology of $L^2(\gamma)$,
then, for every $x$, we have
$$
\int_X b_0(x-y)p_n(y)\, \gamma(dy)\to \int_X b_0(x-y)p(y)\, \gamma(dy).
$$
Due to the boundedness of $b_0$ we have convergence $b(p_n,x)\to b(p,x)$ in~$H$. In particular, we have
$(b(p_n,x),h)_{_H}\to (b(p,x),h)_{_H}$ for every $h\in H$.
For every fixed $p$ the mapping $x\mapsto b(p,x)$ on $X$ is Borel measurable.
For every fixed $x$ the mapping $p\mapsto b(p,x)$ is continuous on $E$ in the weak topology.
Since the sets
$$E_N=\{f\in E\colon \|f\|_{L^2(\gamma)}\le N\}$$
 are compact metrizable in the weak topology,
we conclude that $b$ is jointly Borel measurable on each $E_N\times X$ (see \cite[Exercise~6.10.40]{B07}).
Then $b$ is jointly Borel measurable on all of $E\times X$.
\end{example}

The theorem proved above can be extended to more general unbounded fields $v$ and non-constant diffusion operators.
However, this requires more technicalities based on the results from \cite{BKR96}, \cite{BSS19}, \cite{BKS23},
 and \cite{Hino} and will  be considered separately.

 We now assume that $v$ depends additionally on a parameter $u$ from a complete separable metric space~$U$ and
 $$
 (p,x,u)\mapsto v_u(p,x), \ E\times X\times U\to H
 $$
 is a bounded Borel mapping.

 \begin{proposition}
 There are solutions $p_u\in E$ to the equations
 $$
 L^{*}_{b_u(p_u,\bullet)} (p_u \cdot\gamma)=0
 $$
  such that the mapping $u\mapsto p_u$ from $U$ to $E$
  is Borel measurable.
 \end{proposition}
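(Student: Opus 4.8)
The plan is to realise the family of solutions as a Borel selection from a graph with compact sections. First I would record that, for each fixed $u$, the field $v_u$ is assumed to satisfy the hypotheses of the main theorem (boundedness in $H$ and the weak sequential continuity of $p\mapsto (v_u(p,x),h)_{_H}$), so that a solution $p_u\in E$ exists for every $u$. Since $v$ is bounded uniformly in all of its arguments, the a priori estimate \eqref{b1} holds with a single constant $C$ independent of $u$. Consequently every solution, for every parameter, lies in the convex, weakly compact, metrizable set
$$
S=\{p\in E\colon \textstyle\int_X p^2\,d\gamma\le C\},
$$
and it suffices to produce a selection inside the fixed compact metric space $(S,\text{weak topology})$.

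Next I would introduce the graph of the solution multifunction,
$$
\Gamma=\bigl\{(u,p)\in U\times S\colon\ L_{b_u(p,\bullet)}^*(p\cdot\gamma)=0\bigr\},
$$
and rewrite it through a countable determining family $\mathcal{D}\subset\mathcal{F}\mathcal{C}$ of test functions: by the interpretation of the equation on $\mathcal{F}\mathcal{C}_{\{e_n\}}$ together with the approximation argument of Section~2, one may take the functionals to be a fixed sequence $l_n=\widehat{e}_n$ and let $\psi$ range over a countable family, so that $\Gamma=\{(u,p)\colon F_\varphi(u,p)=0\ \forall\varphi\in\mathcal D\}$, where $F_\varphi(u,p)=\int_X L_{b_u(p,\bullet)}\varphi\cdot p\,d\gamma$. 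Two properties of $F_\varphi$ are needed. First, for fixed $u$ the map $p\mapsto F_\varphi(u,p)$ is weakly continuous on $S$: the second-order and linear terms are integrals against fixed $L^2(\gamma)$-functions, while the nonlinear term is controlled by the same estimate as in the proof of the main theorem, combining $p_n\to p$ weakly with the bounded pointwise convergence $l_j(v_u(p_n,\cdot))\to l_j(v_u(p,\cdot))$ (the cross term is bounded by $\sqrt{C}\,\|g_n-g\|_{L^2(\gamma)}\to 0$). Hence each section $\Gamma_u$ is closed in $S$, so compact, and it is nonempty by the main theorem. Second, $(u,p)\mapsto F_\varphi(u,p)$ is Borel on $U\times S$; granting this, $\Gamma$ is a Borel subset of $U\times S$.

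Finally I would invoke a measurable selection theorem. As $S$ is a compact metric space and $\Gamma\subset U\times S$ is Borel with nonempty compact sections, the Arsenin--Kunugui uniformization theorem (the case of compact, in particular $K_\sigma$, sections) provides a Borel map $u\mapsto p_u$ with $(u,p_u)\in\Gamma$ for every $u$; equivalently, one checks the Kuratowski--Ryll-Nardzewski measurability condition that $\{u\colon \Gamma_u\cap V\ne\emptyset\}$ is Borel for open $V\subset S$ (here a Borel projection, since the sections are compact) and applies the Kuratowski--Ryll-Nardzewski selection theorem. Either way $u\mapsto p_u$ is the desired Borel family of solutions.

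The main obstacle I anticipate is the joint Borel measurability of $F_\varphi$ on $U\times S$. The delicate point is that $p$ appears simultaneously inside the field $v_u(p,\cdot)$ and as the density against which one integrates in $x$, and that the $x$-integration is taken with respect to $\gamma$; one must verify that joint measurability in $(u,p)$ genuinely survives this integration. Because everything in sight is bounded and $S$ carries the metrizable weak topology of the separable space $L^2(\gamma)$, this should follow from standard results on the measurable dependence of parameter integrals (for instance, by approximating $v_u$ with simple $H$-valued Borel functions and passing to the limit), but it is the technical heart of the argument.
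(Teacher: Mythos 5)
Your argument is essentially the paper's own: the same weakly compact set $S$, the same reduction of the equation to a countable family of test-function identities giving a Borel graph with nonempty compact sections, and the same appeal to a measurable selection/uniformization theorem for such sets (the paper cites \cite[\S35]{K}). The one point you flag as the ``technical heart'' --- joint Borel measurability of $(u,p)\mapsto\int_X (v_u(p,\cdot),e_i)_{_H}\,p\,d\gamma$ --- is resolved in the paper by expanding $p$ in an orthonormal basis $\{\psi_j\}$ of $L^2(\gamma)$, so that each summand factors into a term continuous in $p$ and a term that is Borel by the assumed joint measurability of $v$; this is a concrete instance of the standard route you sketch.
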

 \begin{proof}
 Let $S$ be the weakly compact set in $E$ introduced in the proof of the theorem above.
 For every $u\in U$ the set $S_u$ of all solutions $p\in S$ to the nonlinear equation
 $L^{*}_{b_u(p,\bullet)} (p\cdot\gamma)=0$ is compact.
 Let us show that the
 $$
 W=\{(u,p)\colon u\in U, \, p\in S_u\}
 $$
 is Borel in $U\times E$. Once this is done, we can apply the classical measurable selection theorem, which
 gives a Borel mapping $F\colon U\to S$ such that $F(u)\in S_u$ for every $u\in U$ (see  \cite[\S35]{K}).

 As explained above, we can assume that there is a countable family of functionals $l_n\in X^*$ separating points in~$X$,
 so we can assume that the functionals $l_n=\widehat{e}_n$ play this role
 (or simply deal with $\mathbb{R}^\infty$ and take the coordinate functions).
 Then equation $L_b^*(p\cdot\gamma)$ with respect to $\mathcal{F}\mathcal{C}$ is equivalent to this equation
 with respect to $\mathcal{F}\mathcal{C}_{0,\{e_n\}}$ (see the discussion in Section~2). In turn, testing the latter
 reduces to  checking \eqref{eq2} for a suitable countable family $\{\varphi_j\}\subset \mathcal{F}\mathcal{C}_{0,\{e_n\}}$.
  Therefore, the equality $L^{*}_{b_u(p,\bullet)} (p\cdot\gamma)=0$ is equivalent
 to the countable system of relations
 $$
 \int_X \sum_{i=1}^N [\partial_{e_i}^2\varphi_j - l_i\partial_{e_i}\varphi_j +(v_u(p,\bullet), e_i)_{_H}]p\, d\gamma=0.
 $$
 Thus, it remains to observe that this integral
 defines a Borel function on $U\times S$. This is obvious for the  terms with partial derivatives. Let us
 consider the function
 $$
 \int_X (v_u(p,\bullet), e_i)_{_H}] p\, d\gamma.
 $$
 Using an orthonormal basis $\{\psi_j\}$ in $L^2(\gamma)$, we obtain the functions
 $$
\int_X p\psi_j\, d\gamma \int_X (v_u(p,\bullet), e_i)_{_H}] \psi_j\, d\gamma,
 $$
 where the first integral is continuous in $p$ and the second one is Borel measurable in~$u$,
 because the function $(u,x)\mapsto (v_u(p,x), e_i)_{_H}] \psi_j(x)$ is jointly measurable.
  \end{proof}

  Finally, we consider a more general situation where $v$ still takes values in $H$, but is not assumed to be
  uniformly bounded with respect to the norm of~$H$. We now assume instead
  that
  $$
  |(v(\mu,x), e_n)_{_H}|\le C
  $$
  for some constant $C$ and some orthonormal basis $\{e_n\}$ in~$H$.
  To simplify our presentation we assume that $X=\mathbb{R}^\infty$ and $H=l^2$ with its standard basis $\{e_n\}$, however,
  a more abstract formulation can be derived easily from what follows.
  Now no Gaussian measure is fixed, so the mapping $v$ is defined
  on $\mathcal{P}(X)\times X$, where $\mathcal{P}(X)$ is the space of all Borel probability measures on $X$ equipped
  with the weak topology (it is well known that this topology is metrizable by a complete separable metric,
  for example, by the Prohorov metric or by the Kantorovich--Rubinshtein metric).

 \begin{theorem}
 Suppose that the functions $(\mu,x)\mapsto v_n(\mu,x):=(v(\mu,x), e_n)_{_H}$
 are continuous on~$\mathcal{P}(X)\times X$ and uniformly bounded.
 Then there is a probability solution to equation \eqref{eq1-n} with respect to the class $\mathcal{F}\mathcal{C}_{0,\{e_n\}}$.
 \end{theorem}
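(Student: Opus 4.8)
The plan is to combine a finite-dimensional Galerkin-type approximation with a Lyapunov (second-moment) estimate yielding tightness, and then to pass to the limit using the joint continuity of the components $v_n$. Throughout let $\gamma_N$ denote the standard Gaussian on $\mathbb{R}^N$, let $\gamma^{>N}$ denote the standard Gaussian product on the remaining coordinates, and let $\pi_N\colon\mathbb{R}^\infty\to\mathbb{R}^N$ be the projection onto the first $N$ coordinates and $j_N\colon\mathbb{R}^N\to\mathbb{R}^\infty$ the zero-extension.

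First I would freeze the tails of both the measure and the point. Define a finite-dimensional field $w^N\colon\mathcal{P}(\mathbb{R}^N)\times\mathbb{R}^N\to\mathbb{R}^N$ by $w^N_i(\sigma,y)=v_i(\sigma\otimes\gamma^{>N},\,j_N y)$ for $i\le N$. Since $\sigma\mapsto\sigma\otimes\gamma^{>N}$ and $y\mapsto j_N y$ are continuous, $w^N$ is jointly continuous and bounded by $C\sqrt N$; viewed as an $\mathbb{R}^N$-valued (hence $H=\mathbb{R}^N$-valued) drift it satisfies the hypotheses of the theorem proved above on $\mathbb{R}^N$, because weak $L^2(\gamma_N)$ convergence of densities implies weak convergence of the associated measures, so the required continuity in the measure argument carries over. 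That theorem then provides a probability density $\sigma_N$ solving the finite-dimensional nonlinear equation with drift $-y+w^N(\sigma_N,\cdot)$. I lift it to $\mu_N:=(\sigma_N\cdot\gamma_N)\otimes\gamma^{>N}\in\mathcal{P}(\mathbb{R}^\infty)$. Because the standard Gaussian solves the Ornstein--Uhlenbeck equation in each tail coordinate, the product structure shows that for every $\varphi\in\mathcal{F}\mathcal{C}_{0,\{e_n\}}$ one has $\int_X L_{b^N}\varphi\,d\mu_N=0$, where $b^N_i(x)=-x_i+v_i(\mu_N,j_N\pi_N x)$ for $i\le N$ and $b^N_i(x)=-x_i$ for $i>N$.

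The engine of the argument is the estimate coming from $V_m(x)=\sum_{n=1}^m\lambda_n x_n^2$ with fixed weights $\lambda_n>0$ and $\sum_n\lambda_n<\infty$. Using $|v_n|\le C$ one gets $L_{b^N}V_m\le (2+C^2)\sum_{n\le m}\lambda_n-\sum_{n\le m}\lambda_n x_n^2$, and after justifying that $V_m$ may be tested against the stationary $\mu_N$ (a routine cut-off argument, since $\mu_N$ has sub-Gaussian tails in each coordinate by the density bound quoted in the main theorem), integration gives $\sum_{n=1}^m\lambda_n\int x_n^2\,d\mu_N\le (2+C^2)\sum_n\lambda_n=:K$ uniformly in $N$ and $m$. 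Letting $m\to\infty$ yields $\sum_n\lambda_n\int x_n^2\,d\mu_N\le K$ for all $N$, so a Chebyshev estimate on each coordinate produces compact boxes $\prod_n[-R_n,R_n]$ of $\mu_N$-mass $\ge 1-\varepsilon$ uniformly in $N$; thus $\{\mu_N\}$ is tight and, by Prohorov's theorem, relatively weakly compact on $\mathbb{R}^\infty$. Passing to a subsequence, $\mu_{N_k}\rightharpoonup\mu$. Fixing $\varphi\in\mathcal{F}\mathcal{C}_{0,\{e_n\}}$ depending on $x_1,\dots,x_M$ and writing $\int L_{b^N}\varphi\,d\mu_N=0$ for $N\ge M$, the diffusion terms $\int\partial_{x_i}^2\varphi\,d\mu_N$ converge since $\partial_{x_i}^2\varphi$ is bounded continuous, while the terms $\int(-x_i)\partial_{x_i}\varphi\,d\mu_N$ converge because $x_i\partial_{x_i}\varphi$ is bounded — which is exactly the reason for working with the compactly supported class $\mathcal{F}\mathcal{C}_{0,\{e_n\}}$.

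The crux is the drift term $\int v_i(\mu_N,j_N\pi_N x)\,\partial_{x_i}\varphi\,d\mu_N$, and I expect this limit passage to be the main obstacle, since a fixed weak convergence of measures must be combined with integrands that themselves vary with $N$. The resolution rests on two structural facts: the truncation map satisfies $j_N\pi_N x\to x$ uniformly on $\mathbb{R}^\infty$ (the discarded tail contributes at most $2^{-N}$ in the product metric), and $\mu_N\rightharpoonup\mu$; hence by uniform continuity of $v_i$ on the compact set $(\{\mu_N\}\cup\{\mu\})\times\hat K$, with $\hat K$ a compact box containing all $j_N\pi_N x$ for $x$ in a given compact $K$, the integrands converge to $v_i(\mu,x)\partial_{x_i}\varphi(x)$ uniformly on compacta. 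A standard lemma — if $\mu_N\rightharpoonup\mu$ is tight, $f_N\to f$ uniformly on compacta with $f$ bounded continuous and $\sup_N\|f_N\|_\infty<\infty$, then $\int f_N\,d\mu_N\to\int f\,d\mu$ (split the error over a compact set and its complement, controlling the latter by tightness) — then yields convergence of the drift term. In the limit this gives $\int L_{b(\mu,\bullet)}\varphi\,d\mu=0$ for every $\varphi\in\mathcal{F}\mathcal{C}_{0,\{e_n\}}$; and since each $v_n(\mu,\cdot)$ is bounded, $l_n(v(\mu,\cdot))\in L^1(\mu)$, so $\mu$ is the desired probability solution. The only other technical point is the rigorous use of the unbounded $V_m$ as a test function, handled by the cut-off approximation mentioned above.
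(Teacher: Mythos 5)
Your proof is correct and follows essentially the same route as the paper: finite-dimensional nonlinear solutions obtained from the first theorem, a summable-weight quadratic Lyapunov function giving a uniform second-moment bound and hence tightness, and a limit passage for the drift terms via uniform continuity of the $v_i$ on compact sets combined with tightness. The only (immaterial) differences are that you extend the finite-dimensional measures by tensoring with the tail Gaussian rather than concentrating them on the coordinate subspace, and you derive tightness from coordinatewise Chebyshev boxes rather than from compactness of the ellipsoids $\{V\le R\}$.
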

 \begin{proof}
 Let us take any sequence of positive numbers $\alpha_n$ with
 $$
 T:=\sum_{n=1}^\infty \alpha_n<\infty.
 $$
 We construct our solution on the weighted Hilbert space $E\subset X$ of sequences with finite
 norm
 $$
 |x|_{_E}=\Big(\sum_{n=1}^\infty \alpha_n x_n^2\Big)^{1/2}.
 $$
 For every $k\in\mathbb{N}$ we define a $k$-dimensional
 mapping $v^k\colon \mathcal{P}(\mathbb{R}^k)\times \mathbb{R}^k\to \mathbb{R}^k$ by setting
 $v_n^k(\mu,x)=v_n(\mu, x)$, $n\le k$, where each vector  $(x_1,\ldots,x_k)\in \mathbb{R}^k$
 is identified with the vector $(x_1,\ldots,x_k,0,0,\ldots)\in \mathbb{R}^\infty$ and $\mathcal{P}(\mathbb{R}^k)$
 is naturally identified with the subset of $\mathcal{P}(X)$ consisting of measures concentrated on~$\mathbb{R}^k$.

 It follows from our first theorem that for each $k$ there is a probability solution $\mu_k\in \mathcal{P}(\mathbb{R}^k)$
 to the nonlinear equation with the drift $v^k$, moreover, we have that such a solution is absolutely continuous
 with respect to the standard Gaussian measure $\gamma_k$ on~$\mathbb{R}^k$. Indeed, we can consider $v^k_n(\mu,x)$ only for
 absolutely continuous measures and these functions satisfy the continuity assumption from that theorem,
 because  weak convergence of densities in $L^2(\gamma_k)$ implies weak convergence of the corresponding measures.

 Let us take $V(x)=|x|_{_E}^2=\sum_{n=1}^\infty \alpha_n x_n^2$ as a Lyapunov function for our finite-dimensional
 equations. We have
 \begin{multline*}
 L_{v^k(\mu,\bullet)}V(x)=2\sum_{n=1}^k \alpha_n -2 \sum_{n=1}^k \alpha_n x_n^2+ 2 \sum_{n=1}^k \alpha_n x_n v_n^k(\mu,x)
 \\
 \le 2T -2V(x)+2C V(x)^{1/2}T^{1/2}
 \le 2T+C^2T-V(x).
  \end{multline*}
 Since $\mu_k$ satisfies a linear equation with an operator satisfying the above estimate with $V$, it follows
 from standard a priori estimates for solutions to linear Fokker--Planck--Kolmogorov equations (see \cite{bookFPK})
 that
 $$
 \int_{\mathbb{R}^k} V\, d\mu_k\le (2+C^2)T.
 $$
 The sets $\{V\le R\}$ are compact in $\mathbb{R}^\infty$, hence it follows from the Chebyshev inequality that
 the measures $\mu_k$ are uniformly tight on $\mathbb{R}^\infty$. Passing to a subsequence, we can assume that
 they converge weakly to some probability measure $\mu$ on $\mathbb{R}^\infty$.

 Let us verify that $\mu$ is a solution \eqref{eq1-n}. Let $\varphi\in C_0^\infty(\mathbb{R}^k)$. We can assume that
 $|\partial_{x_j}\varphi|\le 1$.
 For all $n\ge k$ we have
 $$
 \int_{\mathbb{R}^n} [\Delta \varphi(x) -(x,\nabla\varphi(x))+
 (v(\mu_n,x),\nabla\varphi(x))]\, \mu_n(dx)=0.
 $$
 Obviously,
 $$
 \int_{\mathbb{R}^n} [\Delta \varphi(x) -(x,\nabla\varphi(x))]\, \mu_n(dx)
 \to
 \int_{\mathbb{R}^\infty} [\Delta \varphi(x) -(x,\nabla\varphi(x))]\, \mu(dx)
 $$
 as $n\to\infty$. Let us show that for each $j\le k$ we
 have
 $$
 \int_{\mathbb{R}^n} v_j(\mu_n,x)\partial_{x_j}\varphi(x)\, \mu_n(dx)\to
 \int_{\mathbb{R}^\infty} v_j(\mu,x)\partial_{x_j}\varphi(x)\, \mu(dx).
 $$
 Let $\varepsilon>0$. We have $|v_j(\mu_n,x)\partial_{x_j}\varphi(x)|\le C$.
 There is $R>0$ such that
 $$
 \mu_n(V> R)+\mu(V> R) \le \varepsilon C^{-1}
 $$
 for all $n$. Hence the integrals of $v_j(\mu_n,x)\partial_{x_j}\varphi(x)$ over the set $\{V> R\}$
 do not exceed $\varepsilon$. The sequence $\{\mu_n\}$ with the added limit $\mu$ is a compact set.
 By the uniform continuity of $v_j$ on compacts sets we conclude that
 $v_j(\mu_n,x)\to v_j(\mu,x)$ uniformly on $\{V\le R\}$. Hence for all $n$ large enough we have
 $$
 \biggl|\int_{\{V\le R\}} v_j(\mu_n,x)\partial_{x_j}\varphi(x)\, \mu_n(dx)-
 \int_{\{V\le R\}} v_j(\mu,x)\partial_{x_j}\varphi(x)\, \mu_n(dx)\biggr|<\varepsilon.
 $$
 In addition, for all $n$ large enough
 $$
 \biggl|\int_{X} v_j(\mu,x)\partial_{x_j}\varphi(x)\, \mu_n(dx)-
 \int_{X} v_j(\mu,x)\partial_{x_j}\varphi(x)\, \mu(dx)\biggr|<\varepsilon.
 $$
 For such $n$ we obtain
 $$
 \biggl|\int_{\{V\le R\}} v_j(\mu,x)\partial_{x_j}\varphi(x)\, \mu_n(dx)-
 \int_{\{V\le R\}} v_j(\mu,x)\partial_{x_j}\varphi(x)\, \mu(dx)\biggr|< 2\varepsilon,
 $$
 hence
 $$
 \biggl|\int_{\{V\le R\}} v_j(\mu_n,x)\partial_{x_j}\varphi(x)\, \mu_n(dx)-
 \int_{\{V\le R\}} v_j(\mu,x)\partial_{x_j}\varphi(x)\, \mu(dx)\biggr|< 3\varepsilon.
 $$
 Therefore, we have the desired convergence of the integrals of $v_j(\mu_n,x)\partial_{x_j}\varphi(x)$, which shows
 that $\mu$ is a solution to our equation.
  \end{proof}

  The authors state no conflict of interest.

  We thank the anonymous referee for useful corrections.

This research is supported by the Russian Science Foundation Grant N 25-11-00007
at the Lomonosov Moscow State University.

\end{document}